\begin{document}
\def\rn{{\mathbb R^n}}  \def\sn{{\mathbb S^{n-1}}}
\def\co{{\mathcal C_\Omega}}
\def\z{{\mathbb Z}}
\def\nm{{\mathbb (\rn)^m}}
\def\mm{{\mathbb (\rn)^{m+1}}}
\def\n{{\mathbb N}}
\def\cc{{\mathbb C}}

\newtheorem{defn}{Definition}
\newtheorem{thm}{Theorem}
\newtheorem{lem}{Lemma}
\newtheorem{cor}{Corollary}
\newtheorem{rem}{Remark}

\title{\bf\Large Sharp bounds on higher-dimensional product spaces for Hardy-type operators on Heisenberg group
\footnotetext{{\it Key words and phrases}: Hardy operator; weighted Hardy operator; weighted Ces$\mathrm{\grave{a}}$ro operator; higher-dimensional product space; Heisenberg group.
\newline\indent\hspace{1mm} {\it 2020 Mathematics Subject Classification}: Primary 42B25; Secondary 42B20, 47H60, 47B47.}}

\date{}
\author{Zhongci Hang, Wenfeng Liu,  Xiang Li\footnote{Corresponding author.} and Dunyan Yan}
\maketitle
\begin{center}
\begin{minipage}{13cm}
{\small {\bf Abstract:}\quad
 In this paper, we study sharp bound on higher-dimensional Lebesgue product space for Hardy operator on Heisenberg group, the constants of sharp bounds are  obtained. In addition, we also give the  boundedness for weighted Hardy operator and weighted Ces$\mathrm{\grave{a}}$ro operator.
 }
\end{minipage}
\end{center}

\section{Introduction}\label{sec1}
\par
The most fundamental averaging operator is Hardy operator defined by 
$$
\mathcal{H}f(x):=\frac{1}{x}\int_0^xf(t)dt,
$$
where the function $f$ is a nonnegative integrable function on $\mathbb{R}^+=(0,\infty)$ and $x>0$. A celebrated integral inequality, due to Hardy \cite{Hardy}, states that 
$$
\|\mathcal{H}(f)\|_{L^p(\mathbb{R}^+)}\leq\frac{p}{p-1}\|f\|_{L^p(\mathbb{R}^+)}
$$
holds for $1<p<\infty$, and the constant $\frac{p}{p-1}$ is the best. 

For the higher-dimensional case $n\geq2$, generally speaking, there exist two different definitions. One is the rectangle averaging operator defined by
$$
\mathfrak{H}(f)(x_1,\ldots,x_n)=\frac{1}{x_1\cdots x_n}\int_{0
}^{x_1}\cdots\int_{0}^{x_n}f(t_1,\ldots,t_n)d t_1 \cdots d t_n,
$$
where the function $f$ is a nonnegative measurable function on $G=(0,\infty)^n$ and $x_i>0$, for $i=1,2,\ldots,m$, the norm of $\|\mathfrak{H}\|_{L^p\rightarrow L^p}$ is $(\frac{p}{p-1})^n$ and obviously depends on the dimension of the space.

Another definition is the spherical averaging operator, which  was introduced by Christ and Grafakos in \cite{Christ} as follows
$$
\mathcal{H}(f)(x)=\frac{1}{|B(0,|x|)|}\int_{|y|<|x|}f(y)dy, x\in\mathbb{R}^n\backslash\{0\},
$$
where $f$ is a nonnegative measurable function on $\mathbb{R}^n$. The norm of $\mathcal{H}$ on $L^p(\mathbb{R}^n)$ is still $\frac{p}{p-1}$, and doesn't depend on the dimension of the space.

In \cite{Lu}, Lu et al. defined the Hardy operator on higher-dimensional product spaces. Let us recall their definition.
\begin{defn}
	Let $m\in\mathbb{N}$, $n_i\in\mathbb{N}$, $x_i\in\mathbb{R}^{n_i}$, $1\leq i\leq m$ and $f$ be a nonnegative measurable function on $\mathbb{R}^{n_1}\times\mathbb{R}^{n_2}\times\cdots\times\mathbb{R}^{n_m}$. The Hardy operator on higher-dimensional product spaces is defined by
	\begin{equation}
	\mathcal{H}_m(f)(x)=\left(\prod^m_{i=1}\frac{1}{|B(0,|x_i|)|}\right)\int_{|y_1|<|x_1|}\cdots\int_{|y_m|<|x_m|}f(y_1,\ldots,y_m)dy_1\cdots d y_m,
	\end{equation}
	where $x=(x_1,x_2,\ldots,x_m)\in\mathbb{R}^{n_1}\times\cdots\times\mathbb{R}^{n_m}$ with $\prod^m_{i=1}|x_i|\neq0$.
\end{defn}
Next, we will study the sharp bounds for Hardy type operators in product spaces on Heisenberg group. Allow us to introduce some basic knowledge about the Heisenberg group which will be used in the following.

The Heisenberg group $\mathbb{H}^n$ is non-commutative nilpotent Lie group, with the underlying manifold $\mathbb{R}^{2n+1}$ and the group law.

Let
$$
x=(x_1,\ldots,x_{2n},x_{2n+1}),y=(y_1,\ldots,y_{2n},y_{2n+1}),
$$
then we have
$$
x \circ y=\left(x_1+y_1, \ldots, x_{2 n}+y_{2 n}, x_{2 n+1}+y_{2 n+1}+2 \sum_{j=1}^n(y_j x_{n+j}-x_j y_{n+j})\right).
$$
By definition, we can see that the identity element on $\mathbb{H}^n$ is $0 \in \mathbb{R}^{2 n+1}$, while the element $x^{-1}$ inverse to $x$ is $-x$. The corresponding Lie algebra is generated by the left-invariant vector fields
$$
\begin{gathered}
	X_j=\frac{\partial}{\partial x_j}+2 x_{n+j} \frac{\partial}{\partial x_{2 n+1}}, \quad j=1,2, \ldots, n, \\
	X_{n+j}=\frac{\partial}{\partial x_{n+j}}-2 x_j \frac{\partial}{\partial x_{2 n+1}}, \quad j=1,2, \ldots, n, \\
	X_{2 n+1}=\frac{\partial}{\partial x_{2 n+1}} .
\end{gathered}
$$
The only non-trivial commutator relation is
$$
\left[X_j, X_{n+j}\right]=-4 X_{2 n+1}, \quad j=1,2 \ldots, n.
$$
Note that  Heisenberg group $\mathbb{H}^n$ is a homogeneous group with dilations
$$
\delta_r(x_1, x_2, \ldots, x_{2 n}, x_{2 n+1})=(r x_1, r x_2, \ldots, r x_{2 n}, r^2 x_{2 n+1}), \quad r>0.
$$

The Haar measure on $\mathbb{H}^n$ coincides with the usual Lebesgue measure on $\mathbb{R}^{2n+1}$. Denoting any measurable set $E \subset \mathbb{H}^n$ by $|E|$, then we obtain
$$
|\delta_r(E)|=r^Q|E|, d(\delta_r x)=r_Q d x,
$$
where $Q=2n+2$ is called the homogeneous dimension of $\mathbb{H}^n$.

The Heisenberg distance derived from the norm
$$
|x|_h=\left[\left(\sum_{i=1}^{2 n} x_i^2\right)^2+x_{2 n+1}^2\right]^{1 / 4},
$$
where $x=(x_1,x_2,\ldots,x_{2n},x_{2n+1})$ is given by
$$
d(p, q)=d(q^{-1} p, 0)=|q^{-1} p|_h.
$$
This distance $d$ is left-invariant in the sense that $d(p,q)$ remains unchanged when $p$ and $q$ are both left-translated by some fixed vector on $\mathbb{H}^n$. Besides, $d$ satisfies the triangular inequality defined by \cite{Kor}
$$
d(p, q) \leq d(p, x)+d(x, q), \quad p, x, q \in \mathbb{H}^n.
$$
For $r>0$ and $x\in\mathbb{H}^n$, the ball and sphere with center $x$ and radius $r$ on $\mathbb{H}^n$ are given by
$$
B(x, r)=\{y \in \mathbb{H}^n: d(x, y)<r\}
$$
and
$$
S(x, r)=\{y \in \mathbb{H}^n: d(x, y)=r\}.
$$
Then we have
$$
|B(x, r)|=|B(0, r)|=\Omega_Q r^Q,
$$
where
$$
\Omega_Q=\frac{2 \pi^{n+\frac{1}{2}} \Gamma(n / 2)}{(n+1) \Gamma(n) \Gamma((n+1) / 2)}
$$
denote the volume of the unit ball $B(0,1)$ on $\mathbb{H}^n$  that is $\omega_Q=Q \Omega_Q$ (see \cite{CT}). For more details about Heisenberg group can be refer to \cite{GB} and \cite{ST}.

In recent years, many operators in harmonic analysis have been proved to be bounded on Heisenberg group. For instance, Wu and Fu \cite{Wu} studied the sharp bound for $n$-dimensional Hardy operator in Lebesgue space on Heisenberg group. Guo et al. \cite{Guo} studied the Hausdorff operator on Heisenberg group. Hang et al. have also conducted many related studies(see \cite{Hang}, \cite{LiGu} and \cite{LiCen}). 

Now, we provide the definition of Hardy-type operator in higher-dimensional spaces on Heisenberg group.
\begin{defn}
		Let $m\in\mathbb{N}$, $n_i\in\mathbb{N}$, $x_i\in\mathbb{H}^{n_i}$, $1\leq i\leq m$ and $f$ be a nonnegative measurable function on $\mathbb{H}^{n_1}\times\mathbb{H}^{n_2}\times\cdots\times\mathbb{H}^{n_m}$. The Hardy-type operator on Heisenberg group is defined by
	\begin{equation}\label{eq1}
	\mathcal{P}_m(f)(x)=\left(\prod^m_{i=1}\frac{1}{|B(0,|x_i|_h)|}\right)\int_{|y_1|_h<|x_1|_h}\cdots\int_{|y_m|_h<|x_m|_h}f(y_1,\ldots,y_m)dy_1\cdots d y_m,
	\end{equation}
	where $x=(x_1,x_2,\ldots,x_m)\in\mathbb{H}^{n_1}\times\cdots\times\mathbb{H}^{n_m}$ with $\prod^m_{i=1}|x_i|_h\neq0$.
\end{defn}

In \cite{Chu}, Chu et al. defined the $n$-dimensional weighted Hardy operator on Heisenberg group $\mathcal{H}_{hw}$ and $n$-dimensional weighted Ces$\mathrm{\grave{a}}$ro operator on Heisenberg group $\mathcal{H}^*_{hw}$. Let us recall their definition.
\begin{defn}
	Let $w:[0,1]\rightarrow[0,\infty)$ be a function, for a measurable function $f$ on $\mathbb{H}^n$. The $n$-dimensional weighted Hardy operator on Heisenberg group $\mathcal{H}_{hw}$ is defined by 
	\begin{equation}
	\mathcal{H}_{hw}:=\int_{0}^{1}f(\delta_t x)w(t)dt,\quad x\in\mathbb{H}^n. 
	\end{equation}
\end{defn}   
\begin{defn}
	For a nonnegative function $w:[0,1]\rightarrow(0,\infty)$. For a measurable complex-valued function $f$ on $\mathbb{H}^n$, the $n$-dimensional weighted  Ces$\grave{a}$ro operator is defined by 
	\begin{equation}
	\mathcal{H}^*_{hw}:=\int_{0}^{1}\frac{f(\delta_{1/t}x)}{t^Q}w(t)dt, \quad x\in\mathbb{H}^n,
	\end{equation}
	which satisfies
	$$
	\int_{\mathbb{H}^n}f(x)(\mathcal{H}_{hw}g)(x)dx=\int_{\mathbb{H}^n}g(x)(\mathcal{H}^*_{hw})(x)dx,
	$$
	where $f\in L^p(\mathbb{H}^n)$, $g\in L^q(\mathbb{H}^n)$,  $1<p<\infty$, $q=p/(p-1)$, $\mathcal{H}_{hw}$ is bounded on $L^p(\mathbb{H}^n)$, and $\mathcal{H}^*_{hw}$ is bounded on $L^q(\mathbb{H}^n)$.  
\end{defn}
Based on the above definitions, we will provide the definition of weighted Hardy operator and weighted Ces$\mathrm{\grave{a}}$ro operator in product spaces on Heisenberg group.
\begin{defn}
Let $m\in\mathbb{N}$, $n_i\in\mathbb{N}$, $x_i\in\mathbb{H}^{n_i}$, $1\leq i\leq m$ and $f$ be a nonnegative measurable function on $\mathbb{H}^{n_1}\times\mathbb{H}^{n_2}\times\cdots\times\mathbb{H}^{n_m}$.The weighted Hardy operator in product space on Heisenberg group is defined by
\begin{equation}
\mathcal{P}_{\varphi,m}(f)(x)=\int_0^1\cdots\int_0^1f(\delta_{t_1}x_1,\ldots,\delta_{t_m}x_m)\varphi(t_1,\ldots,t_m)d t_1\cdots d t_m,
\end{equation}
where $\varphi$ is a nonnegative measurable function on  $\overbrace{[0,1]\times\cdots\times [0,1] }^m$ and  $x=(x_1,x_2,\ldots,x_m)\in\mathbb{H}^{n_1}\times\mathbb{H}^{n_2}\times\cdots\times\mathbb{H}^{n_m}$.
\end{defn}
\begin{defn}
Let $m\in\mathbb{N}$, $n_i\in\mathbb{N}$, $x_i\in\mathbb{H}^{n_i}$, $1\leq i\leq m$ and $f$ be a nonnegative measurable function on $\mathbb{H}^{n_1}\times\mathbb{H}^{n_2}\times\cdots\times\mathbb{H}^{n_m}$.The weighted Ces$\grave{a}$ro operator in product space on Heisenberg group is defined by
\begin{equation}
\mathcal{P}^*_{\varphi,m}(f)(x)=\int_0^1\cdots\int_0^1\frac{f(\delta_{t_1^{-1}}x_1,\ldots,\delta_{t_m^{-1}}x_m)\varphi(t_1,\ldots,t_m)}{|t_1|_h^Q \cdots |t_m|_h^Q}d t_1\cdots d t_m,
\end{equation}
where $\varphi$ is a nonnegative measurable function on  $\overbrace{[0,1]\times\cdots\times [0,1] }^m$ and  $x=(x_1,x_2,\ldots,x_m)\in\mathbb{H}^{n_1}\times\mathbb{H}^{n_2}\times\cdots\times\mathbb{H}^{n_m}$.
\end{defn}
Next, we will give our main results.
\section{Sharp bound for the Hardy-type operator}
In this section, we will give the sharp bound for Hardy type operator in higher-dimensional Lebesgue space on Heisenberg group.
\begin{thm}\label{thm1}
	Let $1<p<\infty$, $m\in\mathbb{N}$, $n_i\in\mathbb{N}$, $x_i\in\mathbb{H}^{n_i}$, 1$\leq i\leq m$ and $f\in L^p(\mathbb{H}^{n_1}\times\mathbb{H}^{n_2}\times\cdots\times\mathbb{H}^{n_m})$. Then the Hardy-type operator $\mathcal{P}_m$ defined in (\ref{eq1}) is bounded on $L^p(\mathbb{H}^{n_1}\times\mathbb{H}^{n_2}\times\cdots\times\mathbb{H}^{n_m})$ and the norm of $\mathcal{P}_m$ can be obtained as follows
	$$
	\|\mathcal{P}_m\|_{L^p(\mathbb{H}^{n_1}\times\mathbb{H}^{n_2}\times\cdots\times\mathbb{H}^{n_m})\rightarrow L^p(\mathbb{H}^{n_1}\times\mathbb{H}^{n_2}\times\cdots\times\mathbb{H}^{n_m})}=\left(\frac{p}{p-1}\right)^m.
	$$
	\end{thm}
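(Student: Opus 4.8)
The plan is to establish the two matching estimates $\|\mathcal{P}_m\|_{L^p\to L^p}\le(p/(p-1))^m$ and $\|\mathcal{P}_m\|_{L^p\to L^p}\ge(p/(p-1))^m$. Write $Q_i=2n_i+2$ for the homogeneous dimension of $\mathbb{H}^{n_i}$ and $p'=p/(p-1)$. The starting observation is that, applying in each slot the dilation change of variables $y_i=\delta_{|x_i|_h}z_i$ (so that $|y_i|_h<|x_i|_h$ becomes $|z_i|_h<1$, $dy_i=|x_i|_h^{Q_i}\,dz_i$, and $|B(0,|x_i|_h)|=\Omega_{Q_i}|x_i|_h^{Q_i}$), the operator takes the scale-free form
\[
\mathcal{P}_m(f)(x)=\Big(\prod_{i=1}^m\frac{1}{\Omega_{Q_i}}\Big)\int_{|z_1|_h<1}\cdots\int_{|z_m|_h<1}f(\delta_{|x_1|_h}z_1,\ldots,\delta_{|x_m|_h}z_m)\,dz_1\cdots dz_m .
\]
In particular $\mathcal{P}_m$ is the composition of the $m$ single-variable spherical Hardy operators acting in the separate factors $\mathbb{H}^{n_i}$, which reduces the problem to the one-factor sharp bound (the Heisenberg analogue of the Christ--Grafakos estimate, obtained by Wu and Fu \cite{Wu}) together with the elementary fact that an operator acting in one variable has the same $L^p$ norm on the product space (Fubini). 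I will nonetheless carry out the estimate directly, so as to track the constant.

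For the \emph{upper bound} I would apply the generalized Minkowski integral inequality to the displayed formula, pulling the $L^p(dx)$-norm inside the $dz$-integration, and then pass to polar coordinates $x_i=\delta_{\rho_i}\omega_i$, $\rho_i>0$, $\omega_i\in S(0,1)$, with $dx_i=\rho_i^{Q_i-1}\,d\rho_i\,d\sigma(\omega_i)$ and $\int_{S(0,1)}d\sigma=\omega_{Q_i}=Q_i\Omega_{Q_i}$. Since the integrand depends on $x_i$ only through $\rho_i=|x_i|_h$, the substitution $\tau_i=\rho_i|z_i|_h$ turns the $\rho_i$-integral into $\int_0^1 r^{Q_i/p'-1}\,dr=p'/Q_i$; after collecting the constants $\omega_{Q_i}^{1/p}\Omega_{Q_i}^{-1}(p'/Q_i)=p'\,\omega_{Q_i}^{-1/p'}$ one is left with an integral over the product of unit spheres $\prod_{i}S(0,1)$, on which Hölder's inequality (total mass $\prod_i\omega_{Q_i}$) produces exactly $\|\mathcal{P}_m f\|_p\le(p')^m\|f\|_p$.

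For the \emph{lower bound} I would test against the radial tensor products $f_\varepsilon(y_1,\ldots,y_m)=\prod_{i=1}^m f_{i,\varepsilon}(y_i)$ with $f_{i,\varepsilon}(y_i)=|y_i|_h^{-Q_i/p-\varepsilon}\chi_{\{|y_i|_h\ge1\}}$, $\varepsilon>0$ small, for which $\|f_\varepsilon\|_p^p=\prod_i\omega_{Q_i}/(p\varepsilon)$. Because $f_\varepsilon$ is a tensor product of radial functions, $\mathcal{P}_m f_\varepsilon$ is the tensor product of the one-variable averages, and for $|x_i|_h>1$ the $i$-th factor equals
\[
\frac{1}{|B(0,|x_i|_h)|}\int_{|y_i|_h<|x_i|_h}f_{i,\varepsilon}(y_i)\,dy_i=\frac{Q_i}{\,Q_i/p'-\varepsilon\,}\big(|x_i|_h^{-Q_i/p-\varepsilon}-|x_i|_h^{-Q_i}\big).
\]
Forming $\|\mathcal{P}_m f_\varepsilon\|_p^p/\|f_\varepsilon\|_p^p$, the prefactors $Q_i/(Q_i/p'-\varepsilon)$ converge to $p'$ while the remaining ratio of integrals converges to $1$ as $\varepsilon\to0^+$ (the mass concentrates where $|x_i|_h\to\infty$, where the bracket behaves like $|x_i|_h^{-Q_i/p-\varepsilon}$; one makes this precise by bounding below on $\{|x_i|_h>R\}$ and then letting $R\to\infty$). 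Hence $\|\mathcal{P}_m f_\varepsilon\|_p/\|f_\varepsilon\|_p\to(p')^m$, and combined with the upper bound this pins down the norm.

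The main obstacle is sharpness of the upper estimate: a crude use of Minkowski's inequality on the defining integral would lose a dimension-dependent factor, so the computation must be arranged so that the only genuine inequality is the single Hölder step on the sphere, which becomes an equality in the limit on (near-)radial functions — precisely the functions $f_\varepsilon$ used for the lower bound. The rest is bookkeeping: one must keep the volume constants $\Omega_{Q_i}$, $\omega_{Q_i}$ and the relation $\omega_{Q_i}=Q_i\Omega_{Q_i}$ straight so that everything cancels into the clean constant $(p/(p-1))^m$, and justify the $\varepsilon\to0^+$ limit of the explicit but non-elementary integrals appearing in the lower bound.
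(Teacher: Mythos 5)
Your argument is correct and is essentially the paper's proof: the upper bound rests on Minkowski's integral inequality plus a single sharp H\"older step on the product of unit spheres (the paper packages that H\"older step as a preliminary reduction to radial functions via the spherical average $g_f$ and then applies Minkowski with the dilation change of variables), and the lower bound tests against near-extremal powers $|x_i|_h^{-Q_i/p\mp\varepsilon}$ and lets $\varepsilon\to 0^+$. The only cosmetic differences are the order of the two steps in the upper bound and your choice of test functions supported outside the unit balls (which forces the extra truncation to $\{|x_i|_h>R\}$ and the limit $R\to\infty$), whereas the paper uses $|x_i|_h^{-Q_i/p+\varepsilon}\chi_{\{|x_i|_h<1\}}$, for which the computation closes without any auxiliary limit.
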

	\begin{proof}[Proof of Theorem \ref{thm1}] 
	We merely give the proof with the case $m=2$ for the sake of clarity in writing, the same is true for the general case $m\geq2$. We set
	$$
	g_f(x_1,x_2)=\frac{1}{\omega_{Q_1}}\frac{1}{\omega_{Q_2}}\int_{|\xi_1|_h=1}\int_{|\xi_2|_h=1}f(\delta_{|x_1|_h}\xi_1,\delta_{|x_2|_h}\xi_2)d\xi_1d\xi_2,\quad x\in\mathbb{H}^{n_i},
	$$
then $g$ is a nonnegative radial function with respect to the variables $x_1,x_2$ respectively. By change of variables, we have
$$
\begin{aligned}
	\mathcal{P}_2(g_f)(x_1,x_2)=&\frac{1}{|B(0,|x_1|_h)|}\frac{1}{|B(0,|x_2|_h)|}\int_{B(0,|x_1|_h)}\int_{B(0,|x_2|_h)}\frac{1}{\omega_{Q_1}}\frac{1}{\omega_{Q_2}}\\
	&\times\int_{|\xi_1|_h=1}\int_{|\xi_2|_h=1}f(\delta_{|y_1|_h}\xi_1,\delta_{|y_2|_h}\xi_2)d\xi_1d\xi_2 d y_1 dy_2\\
	=&\frac{1}{\omega_{Q_1}}\frac{1}{\omega_{Q_2}}\int_{|\xi_1|_h=1}\int_{|\xi_2|_h=1}\frac{1}{|B(0,|x_1|_h)|}\frac{1}{|B(0,|x_2|_h)|}\\
	&\times\int_{B(0,|x_1|_h)}\int_{B(0,|x_2|_h)}f(\delta_{|y_1|_h}\xi_1,\delta_{|y_2|_h}\xi_2)dy_1 dy_2 d\xi_1d\xi_2\\
	=&\frac{1}{\omega_{Q_1}}\frac{1}{\omega_{Q_2}}\int_{|\xi_1|_h=1}\int_{|\xi_2|_h=1}\frac{1}{|B(0,|x_1|_h)|}\frac{1}{|B(0,|x_2|_h)|}\\
	&\times\int_0^{|x_1|_h}\int_0^{|x_2|_h}\int_{|y_1^{'}|_h=1}\int_{|y_2^{'}|_h=1}f(\delta_{r_1}\xi_1,\delta_{r_2}\xi_2)\\
	&\times r_1^{Q_1-1}r_2^{Q_2-1} dy_1^{'}dy_2^{'}dr_1dr_2d\xi_1d\xi_2\\
	=&\int_{|\xi_1|_h=1}\int_{|\xi_2|_h=1}\frac{1}{|B(0,|x_1|_h)|}\frac{1}{|B(0,|x_2|_h)|}\\
	&\times\int_{0}^{|x_1|_h}\int_{0}^{|x_2|_h}f(\delta_{r_1}\xi_1,\delta_{r_2}\xi_2)r_1^{Q_1-1}r_2^{Q_2-1}dr_1 dr_2 d\xi_1d\xi_2\\
	=&\mathcal{P}_2(f)(x_1,x_2).
	\end{aligned}
$$
Using H$\ddot{o}$lder's inequality, we get
$$
\begin{aligned}
\|g_f\|_{L^p(\mathbb{H}^{n_1}\times\mathbb{H}^{n_2})}
=&\left(\int_{\mathbb{H}^{n_1}}\int_{\mathbb{H}^{n_2}}\left|\frac{1}{\omega_{Q_1}}\frac{1}{\omega_{Q_2}}\int_{|\xi_1|_h=1}\int_{|\xi_2|_h=1}f_(\delta_{|x_1|_h}\xi_1,\delta_{|x_2|_h}\xi_2)d\xi_1d\xi_2\right|^pd x_1 d x_2\right)^{1/p}\\
\leq&\frac{1}{\omega_{Q_1}}\frac{1}{\omega_{Q_2}}\left\{\int_{\mathbb{H}^{n_1}}\int_{\mathbb{H}^{n_2}}\left(\int_{|\xi_1|_h=1}\int_{|\xi_2|_h=1}|f(\delta_{|x_1|_h}\xi_1,\delta_{|x_2|_h}\xi_2)|^p d\xi_1 d \xi_2\right)\right.\\
&\times\left.\left(\int_{|\xi_1|_h=1}\int_{|\xi_2|_h=1}d\xi_1d\xi_2\right)^{p/p^{'}}d x_1 d x_2\right\}^{1/p}\\
=&\frac{1}{\omega^{1/p}_{Q_1}}\frac{1}{\omega^{1/p}_{Q_2}}\left\{ \int_{0}^{+\infty}\int_{0}^{+\infty}\int_{|x^{'}_1|_h}\int_{|x^{'}_2|_h}\left(\int_{|\xi_1|_h=1}\int_{|\xi_2|_h=1}|f(\delta_{r_1}\xi_1,\delta_{r_2}\xi_2)|^pd\xi_1 d\xi_2\right)\right.\\
&\times \left. r_1^{Q_1-1} r_2^{Q_2-1}dx_1^{'}d x_2^{'}dr_1dr_2 \right\}^{1/p}\\
=&\|f\|_{L^p(\mathbb{H}^{n_1}\times\mathbb{H}^{n_2})}.
\end{aligned}
$$
Thus, we have
$$
\frac{\|\mathcal{P}_2(f)\|_{L^p(\mathbb{H}^{n_1}\times\mathbb{H}^{n_2})}}{\|f\|_{L^p(\mathbb{H}^{n_1}\times\mathbb{H}^{n_2})}}\leq\frac{\|\mathcal{P}_2(g_f)\|_{L^p(\mathbb{H}^{n_1}\times\mathbb{H}^{n_2})}}{\|g_f\|_{L^p(\mathbb{H}^{n_1}\times\mathbb{H}^{n_2})}}.
$$
This implies the operator $\mathcal{P}_2$ and its restriction to radial function have same norm in $L^p(\mathbb{H}^{n_1}\times\mathbb{H}^{n_2})$. Without loss of generality, we assume that $f$ is a radial function in the rest of the proof.

By changing variables, we have that
$$
\mathcal{P}_2(f)=\frac{1}{\Omega_{Q_1}}\frac{1}{\Omega_{Q_2}}\int_{|z_1|_h<1}\int_{|z_2|_h<1}f(\delta_{|x_1|_h}z_1,\delta_{|x_2|_h}z_2)dz_1dz_2.
$$
Using Minkowski's inequality, we can get
$$
\begin{aligned}
&\|\mathcal{P}_2(f)\|_{L^p(\mathbb{H}^{n_1}\times\mathbb{H}^{n_2})}\\
=&\left(\int_{\mathbb{H}^{n_1}}\int_{\mathbb{H}^{n_2}}\left|\frac{1}{\Omega_{Q_1}}\frac{1}{\Omega_{Q_2}}\int_{|z_1|_h<1}\int_{|z_2|_h<1}f(\delta_{|x_1|_h}z_1,\delta_{|x_2|_h}z_2)dz_1dz_2\right|^pdx_1 dx_2\right)^{1/p}\\
\leq&\frac{1}{\Omega_{Q_1}}\frac{1}{\Omega_{Q_2}}\int_{|z_1|_h<1}\int_{|z_2|_h<1}\left(\int_{\mathbb{H}^{n_1}}\int_{\mathbb{H}^{n_2}}|f(\delta_{|z_1|_h}x_1,\delta_{|z_2|_h}x_2)|^pdx_1dx_2\right)^{1/p}dz_1dz_2\\
=&\frac{1}{\Omega_{Q_1}}\frac{1}{\Omega_{Q_2}}\int_{|z_1|_h<1}\int_{|z_2|_h<1}\left(\int_{\mathbb{H}^{n_1}}\int_{\mathbb{H}^{n_2}}|f(x_1,x_2)|^pdx_1dx_2\right)^{1/p}\\
&\times|z_1|_h^{-Q_1/p}|z_2|_h^{-Q_2/p}dz_1dz_2\\
=&\frac{1}{\Omega_{Q_1}}\frac{1}{\Omega_{Q_2}}\int_{|z_1|_h<1}\int_{|z_2|_h<1}|z_1|_h^{-Q_1/p}|z_2|_h^{-Q_2/p}dz_1dz_2\|f\|_{L^p(\mathbb{H}^{n_1}\times\mathbb{H}^{n_2})}\\
=&\left(\frac{p}{p-1}\right)^2\|f\|_{L^p(\mathbb{H}^{n_1}\times{H}^{n_2})}.
\end{aligned}
$$	
Therefore, it implies that
$$
\|\mathcal{P}_2\|_{L^p(\mathbb{H}^{n_1}\times\mathbb{H}^{n_2})\rightarrow L^p(\mathbb{H}^{n_1}\times\mathbb{H}^{n_2})}\leq\left(\frac{p}{p-1}\right)^2.
$$
On the other hand, for 
$$
0<\varepsilon<\min\left\{1,\frac{(p-1)Q_1}{p},\frac{(p-1)Q_2}{p}\right\},
$$
taking
$$
f_{\varepsilon}(x_1,x_2)=|x_1|_h^{-\frac{Q_1}{p}+\varepsilon}|x_2|_h^{-\frac{Q_2}{p}+\varepsilon}\chi_{\{|x_1|_h<1,|x_2|_h<1\}}(x_1,x_2),
$$
then we have
$$
\|f_\varepsilon\|^p_{L^p(\mathbb{H}^{n_1}\times\mathbb{H}^{n_2})}=\frac{\omega_{Q_1}\omega_{Q_2}}{\varepsilon^2p^2}.
$$
Let us rewrite $\mathcal{P}_2(f_{\varepsilon})$ as follows
$$
\begin{aligned}
\mathcal{P}_2(f_{\varepsilon})=&\frac{1}{|B(0,|x_1|_h)||B(0,|x_2|_h)|} \int_{|y_1|_h<|x_1|_h} \int_{|y_2|_h<|x_2|_h} f_{\varepsilon}(y_1, y_2) d y_1 d y_2\\
=&\frac{1}{|B(0,1)||B(0,1)|} \int_{|z_1|_h<1} \int_{|z_2|_h<1} f_{\varepsilon}(\delta_{|x_1|_h}z_1, \delta_{|x_2|_h}z_2) d z_1 d z_2\\
=&\frac{|x_1|_h^{-\frac{Q_1}{p}+\varepsilon}|x_2|_h^{-\frac{Q_2}{p}+\varepsilon}}{|B(0,1)||B(0,1)|}\\
&\times\int_{\{|z_1|_h<1,|z_1|_h<|x_1|_h^{-1}\}}\int_{\{|z_2|_h<1,|z_2|_h<|x_2|_h^{-1}\}}|z_1|_h^{-\frac{Q_1}{p}+\varepsilon}|z_2|_h^{-\frac{Q_2}{p}+\varepsilon}dz_1dz_2.
\end{aligned}
$$
Consequently, we have
$$
\begin{aligned}
&\|\mathcal{P}_2(f_{\varepsilon})\|^p_{L^p(\mathbb{H}^{n_1}\times\mathbb{H}^{n_2})}\\
=&\frac{1}{(\Omega_{Q_1}\Omega_{Q_2})^p}\int_{\mathbb{H}^{n_1}}\int_{\mathbb{H}^{n_2}}\left|\int_{\{|z_1|_h<1,|z_1|_h<|x_1|_h^{-1}\}}\int_{\{|z_2|_h<1,|z_2|_h<|x_2|_h^{-1}\}}|z_1|_h^{-\frac{Q_1}{p}+\varepsilon}\right.\\
&\times\left.|z_2|_h^{-\frac{Q_2}{p}+\varepsilon}dz_1dz_2\right|^p|x_1|_h^{p\varepsilon-Q_1}|x_2|_h^{p\varepsilon-Q_2}dx_1dx_2\\
\geq&\frac{1}{(\Omega_{Q_1}\Omega_{Q_2})^p}\int_{|x_1|_h<1}\int_{|x_2|_h<1}\left|\int_{|z_1|_h<1}\int_{|z_2|_h<1}|z_1|_h^{-\frac{Q_1}{p}+\varepsilon}\right.\\
&\times\left.|z_2|_h^{-\frac{Q_2}{p}+\varepsilon}dz_1dz_2\right|^p|x_1|_h^{p\varepsilon-Q_1}|x_2|_h^{p\varepsilon-Q_2}dx_1dx_2\\
=&\frac{\omega_{Q_1}\omega_{Q_2}}{\varepsilon^2p^2}\frac{1}{(\Omega_{Q_1}\Omega_{Q_2})^p}\left(\int_{|z_1|_h<1}\int_{|z_2|_h<1}|z_1|_h^{-\frac{Q_1}{p}+\varepsilon}|z_2|_h^{-\frac{Q_2}{p}+\varepsilon}dz_1dz_2\right)^p\\
=&\left(\frac{p}{p-1+{r\varepsilon/Q_1}}\frac{p}{p-1+{r\varepsilon/Q_2}}\right)^p\|f_{\varepsilon}\|_{L^p(\mathbb{H}^{n_1}\times\mathbb{H}^{n_2})}.
\end{aligned}
$$ 
Thus, we have
$$
\|\mathcal{P}_2\|_{L^p(\mathbb{H}^{n_1}\times\mathbb{H}^{n_2})\rightarrow L^p(\mathbb{H}^{n_1}\times\mathbb{H}^{n_2})}\geq \frac{p}{p-1+{r\varepsilon/Q_1}}\frac{p}{p-1+{r\varepsilon/Q_2}}.
$$
Consequently, using the definition of the norm of an operator and letting $\varepsilon\rightarrow0$, we conclude that
$$
\|\mathcal{P}_2\|_{L^p(\mathbb{H}^{n_1}\times\mathbb{H}^{n_2})\rightarrow L^p(\mathbb{H}^{n_1}\times\mathbb{H}^{n_2})}\geq\left(\frac{p}{p-1}\right)^2.
$$
This finishes the proof of the theorem. 
\end{proof}
\section{Boundedness for the weighted Hardy operator and weighted Ces$\grave{a}$ro operator}
In this section, we will study the boundedness of weighted Hardy-type operator and give the necessary and sufficient conditions for the boundedness.
\begin{thm}\label{thm2}
Let $m\in\mathbb{N}$, $n_i\in\mathbb{N}$, $x_i\in\mathbb{H}^{n_i}$, $1\leq i\leq m$, $\varphi$ is a nonnegative measurable function on  $\overbrace{[0,1]\times\cdots\times [0,1] }^m$. If $f\in L^p(\mathbb{H}^{n_1}\times\mathbb{H}^{n_2}\times\cdots\times\mathbb{H}^{n_m})$, then the weighted Hardy operator in product space on Heisenberg group $\mathcal{P}_{\varphi,m}$ is bounded on $L^p(\mathbb{H}^{n_1}\times\mathbb{H}^{n_2}\times\cdots\times\mathbb{H}^{n_m})$ if and only if 
$$
\int_{0}^{1}\cdots\int_0^1 |t_1|_h^{-Q_1/p}\cdots|t_m|_h^{-Q_m/p}\varphi(t_1,\ldots,t_m)dt_1\cdots dt_m<\infty.
$$
\end{thm}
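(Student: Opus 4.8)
The plan is to prove the two implications separately. Write $Q_i=2n_i+2$ for the homogeneous dimension of $\mathbb{H}^{n_i}$ and set
\[
\Lambda:=\int_0^1\!\!\cdots\!\int_0^1 |t_1|_h^{-Q_1/p}\cdots|t_m|_h^{-Q_m/p}\,\varphi(t_1,\ldots,t_m)\,dt_1\cdots dt_m .
\]
For the \emph{sufficiency} direction I would assume $\Lambda<\infty$ and apply Minkowski's integral inequality to pull the $L^p$ norm inside the integration over the dilation parameters, getting
\[
\|\mathcal{P}_{\varphi,m}(f)\|_{L^p}\le\int_0^1\!\!\cdots\!\int_0^1\big\|f(\delta_{t_1}\cdot,\ldots,\delta_{t_m}\cdot)\big\|_{L^p}\,\varphi(t_1,\ldots,t_m)\,dt_1\cdots dt_m .
\]
On each factor $\mathbb{H}^{n_i}$ the substitution $y_i=\delta_{t_i}x_i$ has $dy_i=|t_i|_h^{Q_i}\,dx_i$, hence $\|f(\delta_{t_1}\cdot,\ldots,\delta_{t_m}\cdot)\|_{L^p}=|t_1|_h^{-Q_1/p}\cdots|t_m|_h^{-Q_m/p}\|f\|_{L^p}$; inserting this yields $\|\mathcal{P}_{\varphi,m}(f)\|_{L^p}\le\Lambda\|f\|_{L^p}$, so $\mathcal{P}_{\varphi,m}$ is bounded with norm at most $\Lambda$. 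This is the same Minkowski-type step already used in the proof of Theorem~\ref{thm1}.

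For the \emph{necessity} direction, suppose $\mathcal{P}_{\varphi,m}$ is bounded on $L^p$ with norm $C$. I would test against the power functions concentrated on the unit polyball: for $\delta>0$ put
\[
f_\delta(x)=\prod_{i=1}^m|x_i|_h^{-Q_i/p+\delta}\,\chi_{\{|x_1|_h<1,\ \ldots,\ |x_m|_h<1\}}(x),
\]
which lies in $L^p$ because in polar coordinates $\|f_\delta\|_{L^p}^p=\prod_{i=1}^m\omega_{Q_i}/(p\delta)<\infty$ (exactly as in Theorem~\ref{thm1}). Since $|\delta_{t_i}x_i|_h=t_i|x_i|_h\le|x_i|_h$ for $t_i\in(0,1)$, the cutoff in $f_\delta(\delta_{t_1}x_1,\ldots,\delta_{t_m}x_m)$ imposes no extra restriction once $x$ is in the support of $f_\delta$, and there
\[
\mathcal{P}_{\varphi,m}(f_\delta)(x)=f_\delta(x)\int_0^1\!\!\cdots\!\int_0^1|t_1|_h^{-Q_1/p+\delta}\cdots|t_m|_h^{-Q_m/p+\delta}\,\varphi(t_1,\ldots,t_m)\,dt_1\cdots dt_m=:\Lambda_\delta\,f_\delta(x).
\]
If $\Lambda_\delta$ were infinite, $\mathcal{P}_{\varphi,m}(f_\delta)$ would be $+\infty$ on a set of positive measure, contradicting boundedness; hence $\Lambda_\delta<\infty$, and restricting the $L^p$ norm of $\mathcal{P}_{\varphi,m}(f_\delta)$ to the support of $f_\delta$ gives $\Lambda_\delta\|f_\delta\|_{L^p}\le\|\mathcal{P}_{\varphi,m}(f_\delta)\|_{L^p}\le C\|f_\delta\|_{L^p}$, so $\Lambda_\delta\le C$ for every $\delta>0$. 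Because $|t_i|_h<1$ forces $|t_i|_h^{\delta}\uparrow1$ as $\delta\downarrow0$, the monotone convergence theorem yields $\Lambda=\lim_{\delta\to0^+}\Lambda_\delta\le C<\infty$, which is precisely the stated condition; combining both directions even gives $\|\mathcal{P}_{\varphi,m}\|=\Lambda$.

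The routine pieces are the two changes of variables and the polar-coordinate evaluation of $\|f_\delta\|_{L^p}$. The step requiring the most care is the necessity part: one must check that the defining cutoff of $f_\delta$ really is inactive on its own support, so that $\mathcal{P}_{\varphi,m}(f_\delta)$ is an exact scalar multiple of $f_\delta$ there — this exactness is what makes the family $\{f_\delta\}_{\delta>0}$ detect all of $\Lambda$ in the limit. Finally, since the argument only exploits the product structure together with one-variable scaling on each factor, it works for general $m$ with no modification, so there is no need to single out the case $m=2$ as in Theorem~\ref{thm1}.
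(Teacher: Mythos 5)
Your proof is correct, and the sufficiency half is exactly the paper's argument (Minkowski's integral inequality plus the dilation change of variables $d(\delta_t x)=t^{Q}dx$, giving the upper bound $\Lambda$). Where you genuinely diverge is the necessity half: the paper tests with $f_\epsilon(x)=\prod_i|x_i|_h^{-Q_i/p-\epsilon}\chi_{\{|x_i|_h>1\}}$, supported \emph{outside} the unit polyball, for which the cutoff is active; this forces the truncations $t_i\in[\epsilon,1]$ and $|x_i|_h>\epsilon^{-1}$ and yields only the lower bound $\int_\epsilon^1\cdots\int_\epsilon^1 \prod_i t_i^{-Q_i/p-\epsilon}\varphi\,dt\le C$ before letting $\epsilon\to0$. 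You instead test with $f_\delta(x)=\prod_i|x_i|_h^{-Q_i/p+\delta}\chi_{\{|x_i|_h<1\}}$ (the same family used in the sharpness part of Theorem \ref{thm1}); since $|\delta_{t_i}x_i|_h=t_i|x_i|_h\le|x_i|_h$, the cutoff is inactive on the support and $\mathcal{P}_{\varphi,m}f_\delta=\Lambda_\delta f_\delta$ there exactly, so $\Lambda_\delta\le C$ falls out immediately and monotone convergence gives $\Lambda\le C$. Your route avoids the paper's double truncation, works verbatim for general $m$ (no need to specialize to $m=2$), and, like the paper's argument, actually identifies the operator norm as $\Lambda$. Two small remarks: the notation $|t_i|_h$ for the scalar $t_i\in[0,1]$ should just be $t_i$ (a quirk you inherit from the paper's own statement), and for completeness one should note, as the paper does, that the case $p=\infty$ is trivial so the computation concerns $1\le p<\infty$.
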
 
\begin{thm}\label{thm3}
	Let $m\in\mathbb{N},n_i\in\mathbb{N},x_i\in\mathbb{H}^{n_i},1\leq i\leq m$,  $\varphi$ is a nonnegative measurable function on  $\overbrace{[0,1]\times\cdots\times [0,1] }^m$. If $f\in L^p(\mathbb{H}^{n_1}\times\mathbb{H}^{n_2}\times\cdots\times\mathbb{H}^{n_m})$, then the weighted Ces$\grave{a}$ro  operator in product space on Heisenberg group $\mathcal{P}^*_{\varphi,m}$ is bounded on $L^p(\mathbb{H}^{n_1}\times\mathbb{H}^{n_2}\times\cdots\times\mathbb{H}^{n_m})$ if and only if 
	$$
	\int_{0}^{1}\cdots\int_0^1 |t_1|_h^{-Q_1(1-1/p)}\cdots|t_m|_h^{-Q_m(1-1/p)}\varphi(t_1,\ldots,t_m)dt_1\cdots dt_m<\infty.
	$$
\end{thm}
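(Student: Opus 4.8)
The plan is to follow the same two-step scheme as in the proof of Theorem~\ref{thm2}: Minkowski's integral inequality for the sufficiency, and an explicit extremal family for the necessity; as there, I will take $m=2$ for readability, the general case being identical. Everything rests on the single identity $d(\delta_{r}y_i)=r^{Q_i}\,dy_i$ on $\mathbb{H}^{n_i}$ (where $Q_i=2n_i+2$) together with $|\delta_{t_i^{-1}}x_i|_h=t_i^{-1}|x_i|_h$; in particular the change of variables $y_i=\delta_{t_i^{-1}}x_i$ gives $\|f(\delta_{t_1^{-1}}\cdot,\ldots,\delta_{t_m^{-1}}\cdot)\|_{L^p}=t_1^{Q_1/p}\cdots t_m^{Q_m/p}\,\|f\|_{L^p}$. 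I note in passing that one could instead argue by duality: a one-line computation with the substitution $u_i=\delta_{t_i}x_i$ shows that $\mathcal{P}^*_{\varphi,m}$ is the adjoint of $\mathcal{P}_{\varphi,m}$, so boundedness of $\mathcal{P}^*_{\varphi,m}$ on $L^p$ is equivalent to boundedness of $\mathcal{P}_{\varphi,m}$ on $L^{p'}$, and Theorem~\ref{thm2} with $p$ replaced by $p'$ yields exactly the stated condition since $1/p'=1-1/p$.

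For sufficiency, put $C=\int_0^1\!\cdots\!\int_0^1 t_1^{-Q_1(1-1/p)}\cdots t_m^{-Q_m(1-1/p)}\varphi(t_1,\ldots,t_m)\,dt_1\cdots dt_m$ and assume $C<\infty$. Applying Minkowski's integral inequality to the definition of $\mathcal{P}^*_{\varphi,m}(f)$ moves the $L^p_x$-norm inside the integral in $(t_1,\ldots,t_m)$, giving
$$
\|\mathcal{P}^*_{\varphi,m}(f)\|_{L^p}\le\int_0^1\!\cdots\!\int_0^1\frac{\varphi(t_1,\ldots,t_m)}{t_1^{Q_1}\cdots t_m^{Q_m}}\,\big\|f(\delta_{t_1^{-1}}\cdot,\ldots,\delta_{t_m^{-1}}\cdot)\big\|_{L^p}\,dt_1\cdots dt_m .
$$
Substituting the dilation identity for the inner norm and using $Q_i/p-Q_i=-Q_i(1-1/p)$ then yields $\|\mathcal{P}^*_{\varphi,m}(f)\|_{L^p}\le C\,\|f\|_{L^p}$, so $\mathcal{P}^*_{\varphi,m}$ is bounded on $L^p$ with norm at most $C$.

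For necessity, suppose $\mathcal{P}^*_{\varphi,m}$ is bounded on $L^p$. The test function should mirror the one used for the Hardy operator but with the support and the sign of the perturbation reversed, since the Ces\`aro operator dilates outward rather than inward: for $\varepsilon>0$ take
$$
f_\varepsilon(x_1,\ldots,x_m)=\prod_{i=1}^m |x_i|_h^{-Q_i/p-\varepsilon}\,\chi_{\{|x_i|_h>1\}}(x_i),
$$
which lies in $L^p$ with $\|f_\varepsilon\|_{L^p}^p=(p\varepsilon)^{-m}\prod_{i=1}^m\omega_{Q_i}$. Using $|\delta_{t_i^{-1}}x_i|_h=t_i^{-1}|x_i|_h$, one computes that for every $x$ with $|x_i|_h>1$ for all $i$—so that each cut-off $\chi_{\{t_i<|x_i|_h\}}$ equals $1$ on $t_i\in[0,1]$—
$$
\mathcal{P}^*_{\varphi,m}(f_\varepsilon)(x)=\Big(\prod_{i=1}^m|x_i|_h^{-Q_i/p-\varepsilon}\Big)A_\varepsilon,\qquad A_\varepsilon:=\int_0^1\!\cdots\!\int_0^1\varphi(t_1,\ldots,t_m)\prod_{i=1}^m t_i^{-Q_i(1-1/p)+\varepsilon}\,dt_1\cdots dt_m .
$$
Restricting the $L^p$-norm of $\mathcal{P}^*_{\varphi,m}(f_\varepsilon)$ to the region $\{|x_i|_h>1\ \forall i\}$ and comparing with $\|f_\varepsilon\|_{L^p}$ gives $\|\mathcal{P}^*_{\varphi,m}\|_{L^p\to L^p}\ge A_\varepsilon$ for every $\varepsilon>0$; in particular each $A_\varepsilon$ is finite. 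Letting $\varepsilon\downarrow0$ and applying the monotone convergence theorem (each integrand increases, since $t_i\in(0,1)$ forces $t_i^{\varepsilon}\uparrow1$) shows that $\int_0^1\!\cdots\!\int_0^1 t_1^{-Q_1(1-1/p)}\cdots t_m^{-Q_m(1-1/p)}\varphi(t_1,\ldots,t_m)\,dt_1\cdots dt_m\le\|\mathcal{P}^*_{\varphi,m}\|_{L^p\to L^p}<\infty$, as required. The one place that needs genuine care—here and in Theorem~\ref{thm2}—is the bookkeeping of the dilation Jacobians $t_i^{\pm Q_i}$ and the interaction between the cut-off $\chi_{\{|x_i|_h>1\}}$ built into $f_\varepsilon$ and the cut-off $\chi_{\{t_i<|x_i|_h\}}$ that appears after the outward dilation; the rest is routine.
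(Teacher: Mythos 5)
Your argument is correct and follows exactly the scheme the paper intends for Theorem~\ref{thm3} (it only proves Theorem~\ref{thm2} and declares the method identical): Minkowski's integral inequality plus the dilation identity $d(\delta_t y)=t^{Q}dy$ for the upper bound, and the power-type test function $\prod_i|x_i|_h^{-Q_i/p-\varepsilon}\chi_{\{|x_i|_h>1\}}$ with a monotone-convergence limit $\varepsilon\downarrow0$ for the lower bound. Your bookkeeping (the exponent $-Q_i(1-1/p)$, the norm $\|f_\varepsilon\|_{L^p}^p=(p\varepsilon)^{-m}\prod_i\omega_{Q_i}$, and the observation that the cut-offs are trivially satisfied on $\{|x_i|_h>1\}$) checks out, and the side remark that the result also follows from Theorem~\ref{thm2} by duality with $p'$ is a valid shortcut.
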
 
The proof methods for Theorem \ref{thm2} and Theorem \ref{thm3} are the same. We only give the proof of Theorem \ref{thm2}.
\begin{proof}[Proof of Theorem \ref{thm2}]
We merely give the proof with the case $m=2$ for the sake of clarity in writing, the same is true for the general case $m\geq2$.

Since the case $p=\infty$ is trivial, it suffices to consider the case $1\leq p<\infty$.

Using Minkowski's inequality and the change of variables, we have
$$
\begin{aligned}
\|\mathcal{P}_{\varphi,2}\|_{L^p(\mathbb{H}^{n_1}\times\mathbb{H}^{n_2})}=&\left(\int_{\mathbb{H}^{n_1}}\int_{\mathbb{H}^{n_2}}\left|\int_{0}^{1}\int_{0}^{1}f(\delta_{t_1}x_1,\delta_{t_2}x_2)\varphi(t_1,t_2)d t_1 dt_2\right|^pdx_1dx_2\right)^{1/p}\\
\leq&\int_{0}^{1}\int_{0}^{1}\left(\int_{\mathbb{H}^{n_1}}\int_{\mathbb{H}^{n_2}}|f(\delta_{t_1}x_1,\delta_{t_2}x_2)|^pdx_1dx_2\right)^{1/p}\varphi(t_1,t_2)dt_1dt_2\\
=&\int_{0}^{1}\int_{0}^{1}\left(\int_{\mathbb{H}^{n_1}}\int_{\mathbb{H}^{n_2}}|f(x_1,x_2)|^pdx_1dx_2\right)^{1/p}\\
&\times|t_1|_h^{-Q_1/p}|t_2|_h^{-Q_2/p}\varphi(t_1,t_2)dt_1dt_2\\
=&\int_{0}^{1}\int_{0}^{1}|t_1|_h^{-Q_1/p}|t_2|_h^{-Q_2/p}\varphi(t_1,t_2)dt_1dt_2\|f\|_{L^p(\mathbb{H}^{n_1}\times\mathbb{H}^{n_2})}.
\end{aligned}
$$
Therefore, we have
\begin{equation}\label{main_2}
\|\mathcal{P}_{\varphi,2}\|_{L^p(\mathbb{H}^{n_1}\times\mathbb{H}^{n_2})\rightarrow L^p(\mathbb{H}^{n_1}\times\mathbb{H}^{n_2})}\leq\int_{0}^{1}\int_{0}^{1}|t_1|_h^{-Q_1/p}|t_2|_h^{-Q_2/p}\varphi(t_1,t_2)dt_1dt_2.
\end{equation}
Next, taking 
$$
C=\|\mathcal{P}_{\varphi,2}\|_{L^p(\mathbb{H}^{n_1}\times\mathbb{H}^{n_2})\rightarrow L^p(\mathbb{H}^{n_1}\times\mathbb{H}^{n_2})}<\infty
$$
and for $f\in L^p(\mathbb{H}^{n_1}\times\mathbb{H}^{n_2})$, we have
$$
\|\mathcal{P}_{\varphi,2}\|_{L^p(\mathbb{H}^{n_1}\times\mathbb{H}^{n_2})}\leq C\|f\|_{L^p(\mathbb{H}^{n_1}\times\mathbb{H}^{n_2})}.
$$
For $\epsilon>0$, taking 
$$
f_\epsilon(x)= \begin{cases}0, & |x|_h \leq 1, \\ |x|_h^{-\left(\frac{Q}{p}+\epsilon\right)} & |x|_h>1\end{cases},
$$
we obtain
$$
\|f_\epsilon\|^p_{L^p(\mathbb{H}^{n_1}\times\mathbb{H}^{n_2})}=\frac{\omega_{Q_1}}{\epsilon p}\frac{\omega_{Q_2}}{\epsilon p}
$$
and
$$
\mathcal{P}_{\varphi,2}(f_\epsilon)(x)= \begin{cases}0, & |x|_h \leq 1, \\ |x_1|_h^{-{\frac{Q_1}{p}}-\epsilon}|x_2|_h^{-{\frac{Q_2}{p}}-\epsilon} \int_{|x_1|_h^{-1}}^1\int_{|x_2|_h^{-1}}^1|t_1|_h^{-\frac{Q_1}{p}-\epsilon}|t_1|_h^{-\frac{Q_2}{p}-\epsilon}\varphi(t_1,t_2) d t_1 dt_2, & |x|_h>1\end{cases}.
$$
So, we have  
$$
\begin{aligned}
	C^p\|f_\epsilon\|^p_{L^p(\mathbb{H}^{n_1}\times\mathbb{H}^{n_2})}\geq& \|\mathcal{P}_{\varphi,2}\|^p_{L^p(\mathbb{H}^{n_1}\times\mathbb{H}^{n_2})}\\
	=&\int_{|x_1|_h>1}\int_{|x_2|_h>1}\left(|x_1|_h^{-Q_1/p-\epsilon}|x_2|_h^{-Q_2/p-\epsilon}\right.\\
	&\times\left.\int_{|x_1|_h^{-1}}^1\int_{|x_2|_h^{-1}}^1|t_1|_h^{-\frac{Q_1}{p}-\epsilon}|t_2|_h^{-\frac{Q_2}{p}-\epsilon}\varphi(t_1,t_2) d t_1 dt_2\right)^p dx_1 dx_2\\
	\geq&\int_{|x_1|_h>\epsilon^{-1}}\int_{|x_2|_h>\epsilon^{-1}}\left(|x_1|_h^{-Q_1/p-\epsilon}|x_2|_h^{-Q_2/p-\epsilon}\right.\\
	&\times\left.\int_{\epsilon}^1\int_{\epsilon}^1|t_1|_h^{-\frac{Q_1}{p}-\epsilon}|t_2|_h^{-\frac{Q_2}{p}-\epsilon}\varphi(t_1,t_2) d t_1 dt_2\right)^p dx_1 dx_2\\
	=&\int_{|x_1|_h>\epsilon^{-1}}\int_{|x_2|_h>\epsilon^{-1}}|x_1|_h^{-Q_1-\epsilon p}|x_2|_h^{-Q_2-\epsilon p}d x_1 dx_2\\
	&\times\left(\int_{\epsilon}^1\int_{\epsilon}^1|t_1|_h^{-\frac{Q_1}{p}-\epsilon}|t_2|_h^{-\frac{Q_2}{p}-\epsilon}\varphi(t_1,t_2) d t_1 dt_2\right)^p \\
	=&\left(\int_{\epsilon}^1\int_{\epsilon}^1|t_1|_h^{-\frac{Q_1}{p}-\epsilon}|t_2|_h^{-\frac{Q_2}{p}-\epsilon}\varphi(t_1,t_2) d t_1 dt_2\right)^p\|f_\epsilon\|^p_{L^p(\mathbb{H}^{n_1}\times\mathbb{H}^{n_2})}.
\end{aligned}
$$
This implies that 
$$
\|\mathcal{P}_{\varphi,2}\|_{L^p(\mathbb{H}^{n_1}\times\mathbb{H}^{n_2})\rightarrow L^p(\mathbb{H}^{n_1}\times\mathbb{H}^{n_2})}\geq\int_{\epsilon}^1\int_{\epsilon}^1|t_1|_h^{-\frac{Q_1}{p}-\epsilon}|t_2|_h^{-\frac{Q_2}{p}-\epsilon}\varphi(t_1,t_2) d t_1 dt_2.
$$
Since $\epsilon\rightarrow0$, we can obtain that
\begin{equation}\label{main_3}
\|\mathcal{P}_{\varphi,2}\|_{L^p(\mathbb{H}^{n_1}\times\mathbb{H}^{n_2})\rightarrow L^p(\mathbb{H}^{n_1}\times\mathbb{H}^{n_2})}\geq\int_{0}^1\int_{0}^1|t_1|_h^{-\frac{Q_1}{p}}|t_2|_h^{-\frac{Q_2}{p}}\varphi(t_1,t_2) d t_1 dt_2.
\end{equation}
Combining (\ref{main_2}) and (\ref{main_3}), we have finished the proof. 
\end{proof}
\par
\subsection*{Acknowledgements}
This work was supported by National Natural Science Foundation of  China (Grant No. 12271232) and Shandong Jianzhu University Foundation (Grant No. X20075Z0101).

\begin{flushleft}

	\vspace{0.3cm}\textsc{Zhongci Hang\\School of Science\\Shandong Jianzhu University \\Jinan, 250000\\P. R. China}
	
	\emph{E-mail address}: \textsf{babysbreath4fc4@163.com}

	\vspace{0.3cm}\textsc{Wenfeng Liu\\School of Science\\Shandong Jianzhu University \\Jinan, 250000\\P. R. China}
	
	\emph{E-mail address}: \textsf{19546103663@163.com}
	
	\vspace{0.3cm}\textsc{Xiang Li\\School of Science\\Shandong Jianzhu University\\Jinan, 250000\\P. R. China}
	
	\emph{E-mail address}: \textsf{lixiang162@mails.ucas.ac.cn}

	\vspace{0.3cm}\textsc{Dunyan Yan\\School of Mathematical Sciences\\University of Chinese Academy of Sciences\\Beijing, 100049\\P. R. China}
	
	\emph{E-mail address}: \textsf{ydunyan@ucas.ac.cn}
	
\end{flushleft}

\end{document}